\newenvironment{proof}{{\noindent \it Proof.}}{\hfill $\blacksquare$\par}
\newtheorem{theorem}{Theorem}[section]
\newtheorem{proposition}[theorem]{\rm\bfseries Proposition}
\newtheorem{lemma}[theorem]{Lemma}
\begin{document}
\vspace*{10mm}

\noindent
{\Large \bf The Sombor index and coindex of two-trees}

\vspace*{7mm}

\noindent
{\large \bf Zenan Du$^1$, Lihua You$^{1,*}$, Hechao Liu$^1$, Yufei Huang$^2$}
\noindent

\vspace{7mm}

\noindent
$^1$ School of Mathematical Sciences, South China Normal University,  Guangzhou, 510631, P. R. China,
e-mail: {\tt 2021010121@m.scnu.edu.cn},\quad {\tt ylhua@scnu.edu.cn}, \\ \quad{\tt hechaoliu@m.scnu.edu.cn},\\[2mm]
$^2$ Department of Mathematics Teaching, Guangzhou Civil Aviation College, Guangzhou, 510403, P. R. China,
e-mail: {\tt fayger@qq.com} \\[2mm]
$^*$ Corresponding author
\noindent

\vspace{7mm}

\noindent
{\bf Abstract} \
\noindent
The Sombor index of a graph $G$, introduced by Ivan Gutman, is defined as the sum of the weights $\sqrt{d_G(u)^2+d_G(v)^2}$ of all edges $uv$ of $G$, where $d_G(u)$ denotes the degree of vertex $u$ in $G$. The Sombor coindex is recently defined as $\overline{SO}(G)=\sum \limits_{uv\notin E(G)}\sqrt{d_G(u)^2+d_G(v)^2}$. In this paper, the maximum and second maximum Sombor index, the minimum and second minimum Sombor coindex in two-trees are determined.
%\\[2mm]
%\vspace{5mm}

\noindent
{\bf Keywords} \ Sombor index; Sombor coindex; two-tree

\noindent
{\bf MR(2020) Subject Classification} \ 05C09; 05C92
\baselineskip=0.30in

\section{Introduction}
\hspace{1.5em}Throughout this paper, let $G$ be a simple graph with vertex set $V(G)$ and edge set $E(G)$. Let $|V(G)|, |E(G)|$ denote the order and the number of edges of $G$, respectively. If two vertices $u, v$ are adjacent, we write $u\sim v$. Let $N_G(v)$ be the set of all vertices adjacent to $v$ and $d_G(v)=|N_G(v)|$ be the degree of a vertex $v\in V(G)$. If there is no confusion from the context, we abbreviate $d_G(v)$ by $d(v)$. The complement $\overline{G}$ of a graph $G$ is the graph with vertex set $V(G)$, in which two vertices are adjacent if and only if they are not adjacent in $G$. $K_n$, $K_{a,b}$ are complete and complete bipartite graph of order $n$, $a+b$, respectively.

In the chemical and pharmaceutical sciences, topological indices are graph invariants that play a significant role\cite{Trinajstic}. Molecular models can be used to study chemical graphs in which vertices represent atoms and edges between vertices represent chemical bonds. In recent years, many topological indices have been introduced and applied in various fields of science including structural chemistry, theoretical chemistry, environmental chemistry, etc \cite{Gao, Dearden}. There are several types of topological indices, one of the most important being vertex-degree-based topological indices. For instance, Ivan Gutman introduces the Sombor indices, defined as \cite{Gutman-Geometric}
\begin{equation*}
  SO(G)=\sum_{uv\in E(G)}\sqrt{d_G(u)^2+d_G(v)^2},
\end{equation*}
and the Sombor coindex is recently defined as \cite{Chinglensana}
\begin{equation*}
  \overline{SO}(G)=\sum_{uv\notin E(G)}\sqrt{d_G(u)^2+d_G(v)^2}.
\end{equation*}

Determining the extreme value of topological indices of different graph classes has always been the focus and hot research issue of chemical graph theory, and its results can be used for analysis the structure of compounds with physical or chemical properties provides mathematical methods and tools.
It is verified that the Sombor index has good prediction and identification ability for the simulation of alkane vaporization entropy and vaporization enthalpy \cite{Redzepovic}. At present, the research on topological coindices mainly includes graph operations, coindices of graphs and (in)equalities, the correlation results can be found in \cite{Ashrafi, Ghalavand, Liu-JB, Belay, Gutman-Zagreb}.

The two-tree $T_t$ is defined in \cite{Cochrane} as follows: (1) $T_0\cong K_2$ where $K_2$ is a two-tree with 2 vertices; (2) $T_t (t\geq 1)$ is a two-tree obtained from $T_{t-1}$ by adding a new vertex adjacent to the two end vertices of one edge. Two-trees has a very important structure in complex networks, such as generalized Farey graph and fractal scale-free network \cite{Zhang}. At present, the extreme values of many indices of two-trees have been determined (see \cite{Liu-kq, Sun, Sun-xl, yu, Yu}), while the extreme values of Somber (co)index of two trees are unknown. We continue to determine the extreme value of Sombor (co)index of two-trees.

\section{Extreme Sombor index and Sombor coindex of two-trees}\label{sec-two-trees}
\hspace{1.5em}In this section, the maximum and second maximum Sombor index, the minimum and second minimum Sombor coindex in two-trees are determined by mathematical induction and analytical structure method.

It is clear that $T_t$ $(t\geq 1)$ has at least two vertices of degree 2 and $|V(T_t)|=t+2$. Let $X_n$ denote the graph obtained from $K_{2,n-2}$ by joining an edge between the two vertices of degree $n-2$ and $L_n$ denote the graph obtained from $X_{n-1}$ by adding a new vertex and edge it with a 2-degree point and a $(n-2)$-degree point, then both $X_n$, $L_n$ (see Figure \ref{Xn}, Figure \ref{Ln}) are two-trees.
\begin{figure}[htbp]
\centering
\begin{minipage}[t]{0.48\textwidth}
\centering
\includegraphics[width=3.75cm]{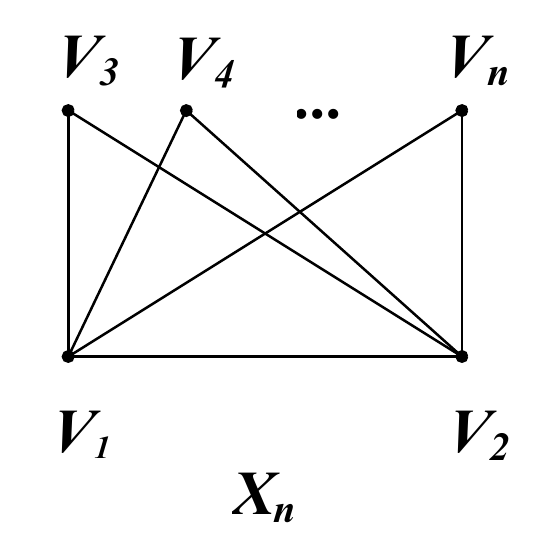}
\caption{The graph $X_n$}\label{Xn}
\end{minipage}
\begin{minipage}[t]{0.48\textwidth}
\centering
\includegraphics[width=5cm]{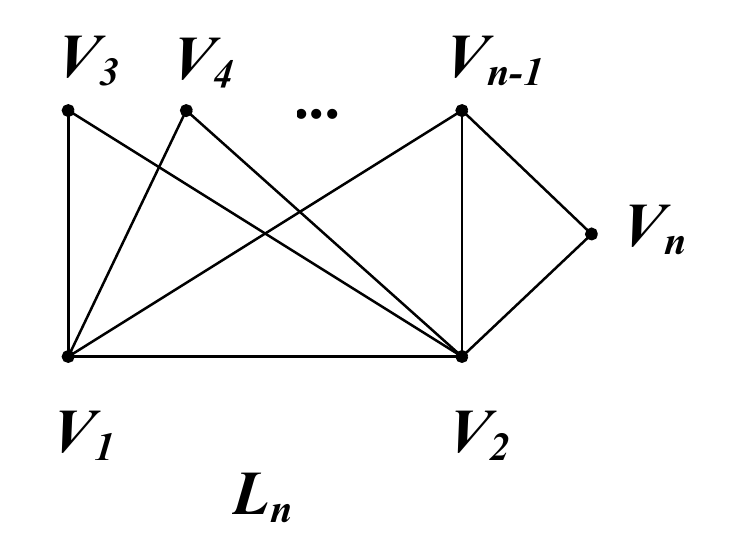}
\caption{The graph $L_n$}\label{Ln}
\end{minipage}
\end{figure}

We first prove some lemmas in preparation for the following main results.

From the definition of the Sombor index and Sombor coindex, we can directly get the following results.
\begin{proposition}
The Sombor index and Sombor coindex of two-tree $X_n$ and $L_n$ are
\begin{equation*}
  \begin{split}
    SO(X_n)=&\sqrt{2}(n-1)+2(n-2)\sqrt{(n-1)^2+4}, \\ \overline{SO}(X_n)=&\sqrt{2}(n-2)(n-3),\\
    SO(L_n)=&(n-4)\sqrt{(n-2)^2+4}+(n-3)\sqrt{(n-1)^2+4}+\sqrt{(n-1)^2+(n-2)^2}\\
    &+\sqrt{13}+\sqrt{(n-1)^2+9}+\sqrt{(n-2)^2+9},\\
    \overline{SO}(L_n)=&\sqrt{2}(n-3)(n-4)+\sqrt{(n-2)^2+4}+\sqrt{13}(n-4).
  \end{split}
\end{equation*}
\end{proposition}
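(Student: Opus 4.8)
The plan is to evaluate all four quantities directly from the definitions, by first fixing the degree sequences of $X_n$ and $L_n$ and then sorting the edges (resp.\ the non-edges) into classes indexed by the unordered pair of endpoint degrees.

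For $X_n$: by its construction from $K_{2,n-2}$, the two vertices in the part of size $2$ are adjacent to each other and to all $n-2$ vertices of the other part, hence have degree $n-1$, while the remaining $n-2$ vertices have degree $2$ and are pairwise non-adjacent. Thus $E(X_n)$ consists of the one edge between the two degree-$(n-1)$ vertices, of weight $\sqrt{2}(n-1)$, and the $2(n-2)$ edges from a degree-$(n-1)$ vertex to a degree-$2$ vertex, each of weight $\sqrt{(n-1)^2+4}$; summing gives $SO(X_n)$. The only non-adjacent pairs are the $\binom{n-2}{2}$ pairs inside the independent set of degree-$2$ vertices, each of weight $\sqrt{2^2+2^2}=2\sqrt{2}$, so $\overline{SO}(X_n)=2\sqrt{2}\binom{n-2}{2}=\sqrt{2}(n-2)(n-3)$. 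As a check, $|E(X_n)|+|\overline{E}(X_n)|=(2n-3)+\binom{n-2}{2}=\binom{n}{2}$.

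For $L_n$: I would first trace the construction. In $X_{n-1}$ the two hubs have degree $n-2$ and the other $n-3$ vertices have degree $2$; attaching the new vertex $w$ to one hub and to one degree-$2$ vertex $c$ raises that hub to degree $n-1$, raises $c$ to degree $3$, and gives $w$ degree $2$. Hence $L_n$ has exactly one vertex of degree $n-1$, one of degree $n-2$, one (namely $c$) of degree $3$, and $n-3$ vertices of degree $2$ (the $n-4$ untouched old ones together with $w$). Enumerating the $2n-3$ edges by type: the hub--hub edge of weight $\sqrt{(n-1)^2+(n-2)^2}$; the two edges from $c$ to the hubs, of weights $\sqrt{(n-1)^2+9}$ and $\sqrt{(n-2)^2+9}$; the edge $cw$ of weight $\sqrt{13}$; the $n-3$ edges from the degree-$(n-1)$ hub to the $n-4$ untouched degree-$2$ vertices and to $w$, each of weight $\sqrt{(n-1)^2+4}$; and the $n-4$ edges from the degree-$(n-2)$ hub to the untouched degree-$2$ vertices, each of weight $\sqrt{(n-2)^2+4}$. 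Adding these gives the stated $SO(L_n)$.

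For $\overline{SO}(L_n)$ I would list the non-adjacent pairs. The $n-4$ untouched degree-$2$ vertices are pairwise non-adjacent and each non-adjacent to both $c$ and $w$, and $w$ is non-adjacent to every vertex except the degree-$(n-1)$ hub and $c$ (in particular non-adjacent to the degree-$(n-2)$ hub). This produces $\binom{n-4}{2}+(n-4)$ non-adjacent pairs of weight $2\sqrt{2}$ (the pairs among the untouched degree-$2$ vertices, plus the pairs of type $\{v_i,w\}$), $n-4$ pairs of weight $\sqrt{2^2+3^2}=\sqrt{13}$ (the pairs $\{v_i,c\}$), and one pair of weight $\sqrt{(n-2)^2+4}$ (formed by $w$ and the degree-$(n-2)$ hub). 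Since $2\sqrt{2}\left(\binom{n-4}{2}+n-4\right)=\sqrt{2}(n-3)(n-4)$, the total is exactly the claimed $\overline{SO}(L_n)$, and the count $\binom{n-4}{2}+2(n-4)+1=\binom{n-2}{2}=\binom{n}{2}-|E(L_n)|$ confirms that nothing has been missed. The only genuine difficulty is the bookkeeping --- correctly tracking which two degrees the construction changes, and listing every edge/non-edge class with the right multiplicity; there is no analytic content, and the two edge-count identities serve as the safeguard.
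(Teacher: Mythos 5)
Your proposal is correct and is exactly the direct enumeration the paper has in mind: the paper states the proposition without proof, remarking only that it follows immediately from the definitions, and your degree-sequence bookkeeping and edge/non-edge classification (with the $\binom{n}{2}$ count as a check) supplies precisely that computation with all multiplicities right.
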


\begin{lemma}\label{lemma1}
  Let $f(x,y)=\sqrt{x^2+y^2}-\sqrt{(x-1)^2+(y-1)^2}$ where $x,y\geq2$. Then $f(x,y)\leq \sqrt{2}$ with equality holds if and only if $x=y$. Moreover, $\frac{\partial f}{\partial x}$ is monotonic increasing with $x$ if $x<y$.
\end{lemma}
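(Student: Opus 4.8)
The plan is to handle the two assertions separately, and in each case to reduce to an inequality that is manifestly true.

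For the bound $f(x,y)\le\sqrt 2$, the guiding observation is geometric: with $O=(0,0)$, $Q=(1,1)$ and $P=(x,y)$ we have $f(x,y)=|OP|-|QP|$, so the reverse triangle inequality gives $|OP|-|QP|\le|OQ|=\sqrt 2$, with equality exactly when $Q$ lies on the segment $OP$; since $x,y\ge 2$ this forces $P$ onto the ray through $O$ and $Q$, i.e.\ $x=y$. To make this rigorous purely algebraically, I would note $x^2+y^2\ge 8$, so $\sqrt{x^2+y^2}-\sqrt 2>0$, square the claim $\sqrt{x^2+y^2}-\sqrt 2\le\sqrt{(x-1)^2+(y-1)^2}$ (both sides nonnegative), cancel $x^2+y^2+2$ to get the equivalent $\sqrt 2\,\sqrt{x^2+y^2}\ge x+y$, and square once more (both sides positive), arriving at $2(x^2+y^2)\ge(x+y)^2$, i.e.\ $(x-y)^2\ge 0$ — with equality throughout iff $x=y$. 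That settles the first sentence.

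For the ``moreover'' part I would compute the derivatives explicitly,
\[
\frac{\partial f}{\partial x}=\frac{x}{\sqrt{x^2+y^2}}-\frac{x-1}{\sqrt{(x-1)^2+(y-1)^2}},\qquad
\frac{\partial^2 f}{\partial x^2}=\frac{y^2}{(x^2+y^2)^{3/2}}-\frac{(y-1)^2}{\bigl((x-1)^2+(y-1)^2\bigr)^{3/2}},
\]
so that the monotonicity of $\partial f/\partial x$ in $x$ is controlled by the sign of $\varphi(x,y)-\varphi(x-1,y-1)$, where $\varphi(a,b)=b^2/(a^2+b^2)^{3/2}$. I would analyse this in polar coordinates: writing $(a,b)=r(\cos\theta,\sin\theta)$ gives $\varphi(a,b)=\sin^2\theta/r$ and $a/\sqrt{a^2+b^2}=\cos\theta$. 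Passing from $(x,y)$ to $(x-1,y-1)$ strictly decreases $r$ (since $x+y>1$) and, when $x<y$, strictly increases the polar angle, because $(y-1)/(x-1)>y/x\iff y>x$; moreover $x<y$ keeps both angles in $(\pi/4,\pi/2)$, where $\sin^2\theta$ is increasing and $\cos\theta$ is decreasing. Combining these facts shows $\varphi(x-1,y-1)>\varphi(x,y)$, hence $\partial^2 f/\partial x^2$ has constant sign and $\partial f/\partial x$ is monotone on $\{x,y\ge 2,\ x<y\}$, while $\partial f/\partial x=\cos\theta_1-\cos\theta_2>0$ there shows $f$ itself is strictly increasing in $x$ when $x<y$.

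The main obstacle is precisely this last sign analysis: the two summands of $\partial^2 f/\partial x^2$ are individually awkward, and comparing $y^2\bigl((x-1)^2+(y-1)^2\bigr)^{3/2}$ with $(y-1)^2(x^2+y^2)^{3/2}$ head-on is messy. The polar substitution is what makes it transparent, by decomposing the shift $(x,y)\mapsto(x-1,y-1)$ into a radial part (decreasing $r$) and an angular part (rotating toward the $y$-axis), each monotone in the favourable direction once $x<y$ is imposed; everything else is routine algebra and calculus.
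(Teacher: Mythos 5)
Your proof is correct. On the main inequality it coincides with the paper's argument: the authors also read $f(x,y)=|OP|-|QP|$ with $O=(0,0)$, $Q=(1,1)$, $P=(x,y)$ and invoke the triangle inequality, with equality iff the three points are collinear, i.e.\ $x=y$; your squaring reduction to $(x-y)^2\ge 0$ is a clean algebraic confirmation of the equality case that the paper leaves at the geometric level.

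On the ``moreover'' clause the comparison is more interesting. The paper's proof does \emph{not} establish that clause as literally written: it only shows $\frac{\partial f}{\partial x}>0$ for $y>x\ge 2$, via the factorization $x^2(y-1)^2-(x-1)^2y^2=(2xy-(x+y))(y-x)>0$ --- which is your observation $\cos\theta_1>\cos\theta_2$ in different clothing, since both reduce to the same comparison $x(y-1)>(x-1)y$. The content actually needed later (Case 2 of the second-maximum theorem) is that $f$ itself is increasing in $x$ when $x<y$, and you do prove that. Your additional second-derivative computation is sound and shows $\frac{\partial^2 f}{\partial x^2}=\varphi(x,y)-\varphi(x-1,y-1)<0$, i.e.\ $\frac{\partial f}{\partial x}$ is monotonically \emph{decreasing} in $x$ on that region; so the lemma's literal wording is backwards, and you were right to hedge with ``monotone'' rather than ``increasing.'' Since nothing downstream uses convexity of $f$ in $x$, only its monotonicity, your proof covers everything the paper actually relies on, and as a bonus it flags the misstatement in the lemma.
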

\begin{proof}
  We prove this conclusion in two-dimensional plane rectangular coordinate system. It is clear that $\sqrt{x^2+y^2}$, $\sqrt{(x-1)^2+(y-1)^2}$ represents the distance between the coordinate $(x,y)$ and $(0,0)$, $(1,1)$ respectively. Then points $(x,y)$, $(0,0)$, $(1,1)$ will form a triangle and let $e_1, e_2, e_3$ represent the edge after connecting points $(x,y)$ and $(0,0)$, $(x,y)$ and $(1,1)$, $(0,0)$ and $(1,1)$, respectively. At this time, the problem is transformed into solving the maximum value of $|e_1|-|e_2|$ where $|e_i|$ represents the length of $e_i$ $(i=1,2,3)$. From the properties of triangles, we know that the difference value between the lengths of any two edges is less than the third edge. Thus $|e_1|-|e_2|\leq |e_3|=\sqrt{2}$ with equality holds if and only if $(x,y)$, $(0,0)$, $(1,1)$ are collinear.

  When $y>x\geq2$, we have $x^2(y-1)^2-(x-1)^2y^2=(2xy-(x+y))(y-x)>0$. Then
  \begin{equation*}
  \begin{split}
    \frac{\partial f}{\partial x}=&\frac{x}{\sqrt{x^2+y^2}}-\frac{x-1}{\sqrt{(x-1)^2+(y-1)^2}}\\
    =&\frac{\sqrt{x^2(x-1)^2+x^2(y-1)^2}-\sqrt{x^2(x-1)^2+(x-1)^2y^2}}{\sqrt{x^2+y^2}\sqrt{(x-1)^2+(y-1)^2}}>0,
  \end{split}
  \end{equation*}
  and we prove the remainder of the Lemma.
\end{proof}

\begin{lemma}\label{lemma2}
  Let $f(x,y)=\sqrt{x^2+y}-\sqrt{(x-1)^2+y}$, $g(x,y)=f(x,y)-f(x-1,y)$, where $x,y$ are positive integers. Then
  \begin{itemize}
  \item[\rm(i)] $f(x,y)$ is monotonic increasing with $x$;
  \item[\rm(ii)] $f(x,y)$ is monotonic decreasing with $y$;
  \item[\rm(iii)] $g(x,y)$ is monotonic decreasing with $x$ and $g(x,y)>0$.
  \end{itemize}
\end{lemma}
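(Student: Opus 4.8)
The plan is to regard $x$ and $y$ as continuous real variables with $x\geq 1$, $y\geq 1$ and to settle all three parts by sign analysis of partial derivatives; since continuous monotonicity is stronger than the claimed monotonicity at integer arguments, this suffices. It is convenient to set $\phi(t)=\sqrt{t^{2}+y}$, so that $f(x,y)=\phi(x)-\phi(x-1)$, and to record $\phi'(t)=t(t^{2}+y)^{-1/2}$, $\phi''(t)=y(t^{2}+y)^{-3/2}>0$, and $\phi'''(t)=-3yt(t^{2}+y)^{-5/2}<0$ for $t>0$.

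For (i), I would compute $\partial f/\partial x=\phi'(x)-\phi'(x-1)$ and note that $\phi'$ is strictly increasing on $[0,\infty)$ because $\phi''>0$ there; since $x>x-1\geq 0$, this yields $\partial f/\partial x>0$. (Equivalently, after rationalizing, $\phi'(x)>\phi'(x-1)$ reduces to $x^{2}\big((x-1)^{2}+y\big)>(x-1)^{2}\big(x^{2}+y\big)$, i.e.\ to $(2x-1)y>0$, which holds for $x\geq 1$.) For (ii), I would compute $\partial f/\partial y=\tfrac12\big((x^{2}+y)^{-1/2}-((x-1)^{2}+y)^{-1/2}\big)$ and observe that $x^{2}-(x-1)^{2}=2x-1\geq 1>0$ forces $x^{2}+y>(x-1)^{2}+y$, so the bracket is negative and $f$ decreases in $y$.

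For (iii), the inequality $g(x,y)>0$ is an immediate corollary of (i): since $x>x-1$, part (i) gives $f(x,y)>f(x-1,y)$, that is, $g(x,y)>0$. The substantive claim is the monotonicity of $g$ in $x$. Writing $g(x,y)=f(x,y)-f(x-1,y)$, one gets $\partial g/\partial x=f_{x}(x,y)-f_{x}(x-1,y)$, so it is enough to show that $x\mapsto f_{x}(x,y)$ is decreasing, i.e.\ that $f_{xx}(x,y)=\phi''(x)-\phi''(x-1)<0$. Since $\phi'''<0$ on $(0,\infty)$, the function $\phi''$ is strictly decreasing on $[0,\infty)$, whence $\phi''(x)<\phi''(x-1)$ and $f_{xx}<0$; substituting this into the expression for $\partial g/\partial x$ gives $\partial g/\partial x<0$, so $g$ decreases in $x$.

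Essentially nothing here is hard: the only care needed is to keep every argument of $\phi,\phi',\phi''$ nonnegative (automatic, since $f(x,y)$ only involves $x-1\geq 0$ and $g(x,y)$ only involves $x-2\geq 0$) so that the monotonicity of these one-variable functions applies, and to confirm that $\phi''>0$ and $\phi'''<0$ hold throughout $(0,\infty)$. If one prefers to avoid derivatives in (iii), the same conclusions follow by applying the mean value theorem twice — to the second-order difference $g(x,y)=\phi(x)-2\phi(x-1)+\phi(x-2)$ and to its $x$-derivative. I expect the write-up to be short; the only mild subtlety is that ``$g$ decreasing in $x$'' is a third-order statement about $\phi$ and so must be routed through $\phi'''$ rather than $\phi'$.
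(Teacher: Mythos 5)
Your proposal is correct and takes essentially the same route as the paper: parts (i) and (ii) via the signs of $\partial f/\partial x$ and $\partial f/\partial y$, and part (iii) via $\partial^2 f/\partial x^2<0$ together with the observation that $g>0$ follows from (i). The only cosmetic difference is that you phrase the second-derivative step through $\phi'''<0$ (i.e.\ $\phi''$ decreasing), while the paper directly compares $(x^2+y)^{-3/2}$ with $((x-1)^2+y)^{-3/2}$ — the same computation.
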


\begin{proof}
  We consider the derivative of $f(x,y)$,
  \begin{equation*}
  \begin{split}
    \frac{\partial f}{\partial x}=&\frac{x}{\sqrt{x^2+y}}-\frac{x-1}{\sqrt{(x-1)^2+y}}\\
         =&\frac{\sqrt{x^2(x-1)^2+yx^2}-\sqrt{x^2(x-1)^2
         +y(x-1)^2}}{\sqrt{x^2+y}\sqrt{(x-1)^2+y}}>0,
  \end{split}
  \end{equation*}
  \begin{equation*}
    \frac{\partial f}{\partial y}=\frac{1}{2}\left(\frac{1}{\sqrt{x^2+y}}-\frac{1}{\sqrt{(x-1)^2+y}}\right)
      =\frac{\sqrt{(x-1)^2+y}-\sqrt{x^2+y}}{\sqrt{x^2+y}\sqrt{(x-1)^2+y}}
      <0.
  \end{equation*}

  Hence $f(x,y)$ is monotonically increases with $x$ and monotonically decreases with $y$, and $g(x,y)>0$ is naturally.

  Besides, by
  \begin{equation*}
    \frac{\partial^2 f}{\partial x^2}=\partial_x(\frac{x}{\sqrt{x^2+y}})-\partial_x(\frac{x-1}{\sqrt{(x-1)^2+y}})
         =y(\frac{1}{(x^2+y)^\frac{3}{2}}-\frac{1}{((x-1)^2+y)^\frac{3}{2}})
         <0
  \end{equation*}
  we know that $\frac{\partial f}{\partial x}$ is monotonic decreasing with $x$ and thus the third claim holds.
\end{proof}

Now we determine the maximum Sombor index of two-trees.
\begin{theorem}
Let $G_n$ be a two-tree of order $n\geq 2$. Then
\begin{equation*}
SO(G_n)\leq \sqrt{2}(n-1)+2(n-2)\sqrt{(n-1)^2+4},
\end{equation*}
with equality holds if and only if $G_n\cong X_n$.
\end{theorem}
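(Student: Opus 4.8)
\noindent The plan is to induct on the order $n$. For $n\le 4$ the only two-tree of order $n$ is $X_n$ (namely $K_2$, $K_3$, and $K_4$ with one edge deleted), so both the inequality and the equality characterization are immediate; assume now $n\ge 5$ and that the theorem holds for two-trees of order $n-1$. Let $G_n$ be a two-tree of order $n$. By the recursive definition, $G_n$ is obtained from some two-tree $G'$ of order $n-1$ by appending a new vertex $v$ of degree $2$ adjacent to the two ends $x,y$ of an edge of $G'$. Put $a=d_{G'}(x)$, $b=d_{G'}(y)$; then $xy\in E(G')$, $2\le a,b\le n-2$, the degrees of $x,y$ become $a+1,b+1$ in $G_n$, and every other vertex keeps its degree.

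First I would record the exact increment: passing from $G'$ to $G_n$ creates the edges $vx,vy$, changes the weight of $xy$, and changes the weight of every edge joining $x$ or $y$ to a third vertex, so
\begin{equation*}
\begin{split}
SO(G_n)-SO(G')={}&\sqrt{(a+1)^2+4}+\sqrt{(b+1)^2+4}\\
&{}+\Big(\sqrt{(a+1)^2+(b+1)^2}-\sqrt{a^2+b^2}\Big)+S_x+S_y,
\end{split}
\end{equation*}
where $S_x$ is the sum over $w\in N_{G'}(x)\setminus\{y\}$ of $\sqrt{(a+1)^2+d_{G'}(w)^2}-\sqrt{a^2+d_{G'}(w)^2}$, and $S_y$ is the symmetric sum over $N_{G'}(y)\setminus\{x\}$. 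By Lemma~\ref{lemma1} applied to $a+1,b+1$, the bracketed difference is at most $\sqrt2$, with equality iff $a=b$. Each summand of $S_x$ equals $f\big(a+1,d_{G'}(w)^2\big)$ in the notation of Lemma~\ref{lemma2}; since that $f$ is non-increasing in its second argument (Lemma~\ref{lemma2}(ii)) and $d_{G'}(w)\ge 2$, we obtain $S_x\le(a-1)\big(\sqrt{(a+1)^2+4}-\sqrt{a^2+4}\big)$, and likewise for $S_y$. Collecting terms gives
\begin{equation*}
SO(G_n)-SO(G')\le h(a)+h(b)+\sqrt2,\qquad\text{where}\quad h(t)=t\sqrt{(t+1)^2+4}-(t-1)\sqrt{t^2+4}.
\end{equation*}

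Next I would show that $h$ is strictly increasing on the positive integers. Writing $h(t)=t\,f(t+1,4)+\sqrt{t^2+4}$ with $f$ as in Lemma~\ref{lemma2}, the factor $f(t+1,4)$ is positive and increasing in $t$ by Lemma~\ref{lemma2}(i), while $t$ and $\sqrt{t^2+4}$ are positive and increasing; hence $h$ is increasing. Therefore $h(a)+h(b)\le 2h(n-2)$ because $a,b\le n-2$, and a direct computation gives $2h(n-2)+\sqrt2=SO(X_n)-SO(X_{n-1})$. Combining with the inductive bound $SO(G')\le SO(X_{n-1})$ yields $SO(G_n)\le SO(X_n)$. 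If equality holds, then both estimates are sharp: $SO(G')=SO(X_{n-1})$ forces $G'\cong X_{n-1}$, and sharpness of the increment forces $a=b=n-2$ together with $d_{G'}(w)=2$ for every neighbour $w$ of $x$ or $y$ other than $x,y$. Since in $X_{n-1}$ (with $n-1\ge 4$) the only two adjacent vertices of degree $n-2$ are its two dominating vertices, all of whose remaining neighbours have degree $2$, the vertex $v$ must be attached to that edge, so $G_n\cong X_n$.

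I expect the crux to be the monotonicity of $h$, i.e.\ the claim that over all admissible pairs the increment $h(a)+h(b)+\sqrt2$ is maximized precisely at $a=b=n-2$: bounding $S_x$, $S_y$ and the change on $xy$ one at a time is routine via Lemmas~\ref{lemma1} and~\ref{lemma2}, but one must still rule out a suboptimal $G'$ paired with an anomalously large increment overtaking $X_n$ — exactly what the monotonicity of $h(t)=t\,f(t+1,4)+\sqrt{t^2+4}$ prevents — and this is also where the equality analysis needs care, since one has to verify that no non-extremal choice of $G'$ can be compensated.
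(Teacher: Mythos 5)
Your proof is correct and follows essentially the same route as the paper: induction on $n$, deleting/attaching a degree-$2$ vertex, bounding the change in $SO$ edge by edge via Lemma~\ref{lemma1} and Lemma~\ref{lemma2}, and forcing $a=b=n-2$ with all other neighbours of degree $2$ in the equality case. Your packaging of the increment as $h(a)+h(b)+\sqrt2$ with $h$ increasing is a slightly cleaner bookkeeping of the same term-by-term estimates the paper performs directly.
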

\begin{proof}
We prove this result by induction on $n$.

If $n=2,3,4$, $G_n\cong X_2, X_3, X_4$ respectively, so the equality holds.

Assume that the result holds for $n-1$. Choose a vertex $w$ of degree 2 from the graph $G_n$, then $G_n-w$ is a two-tree of order $n-1$. By the induction hypothesis, $SO(G_n-w)\leq SO(X_{n-1})$ with equality holds if and only if $G_n-w \cong X_{n-1}$. In the following we prove that $SO(G_n)\leq SO(X_n)$.

Let $u$ and $v$ be two vertices adjacent to the vertex $w$ in $G_n$. Let $d(u)=a,d(v)=b$ and $N_{G_n}(u)\setminus\{v,w\}=\{u_1,u_2,\cdots,u_{a-2}\}$, $N_{G_n}(v)\setminus\{u,w\}=\{v_1,v_2,\cdots,v_{b-2}\}$.
From the construction of $G_n$ we know that $3\leq a,b\leq n-1$, then
\begin{align*}
%\displaybreak
   SO(G_n)=&SO(G_n-w)+\sqrt{a^2+4}+\sqrt{b^2+4}+\sqrt{a^2+b^2}-\sqrt{(a-1)^2+(b-1)^2}\\
          &+\sum_{i=1}^{a-2}\Big(\sqrt{a^2+d(u_i)^2}-\sqrt{(a-1)^2+d(u_i)^2}\Big)\\
          &+\sum_{j=1}^{b-2}\Big(\sqrt{b^2+d(v_j)^2}-\sqrt{(b-1)^2+d(v_j)^2}\Big)\\
          \leq &SO(X_{n-1})+2\sqrt{(n-1)^2+4}+\sqrt{2}\\
          &+\sum_{i=1}^{n-3}\Big(\sqrt{(n-1)^2+d(u_i)^2}-\sqrt{(n-2)^2+d(u_i)^2}\Big)\\
          &+\sum_{j=1}^{n-3}\Big(\sqrt{(n-1)^2+d(v_j)^2}-\sqrt{(n-2)^2+d(v_j)^2}\Big)\\
          \leq&SO(X_{n-1})+2\sqrt{(n-1)^2+4}+\sqrt{2}\\
          &+2(n-3)\sqrt{(n-1)^2+4}-2(n-3)\sqrt{(n-2)^2+4}\\
          =&2(n-3)\sqrt{(n-2)^2+4}+\sqrt{2}(n-2)+\sqrt{2}\\
          &+2(n-2)\sqrt{(n-1)^2+4}-2(n-3)\sqrt{(n-2)^2+4}\\
          =&2(n-2)\sqrt{(n-1)^2+4}+\sqrt{2}(n-1)\\
          =&SO(X_n).
\end{align*}

The first and second inequalities follows from Lemma \ref{lemma1} and Lemma \ref{lemma2}. Hence the equality holds if and only if $G_n-w\cong X_{n-1}$, $a=b=n-1$ and $d(u_i)=d(v_i)=2(i=1,2,\cdots,n-3)$, which implies $G_n\cong X_n$ and we complete the proof.
\end{proof}

Next we determine the second maximum Sombor index of two-trees.
\begin{theorem}
  Let $G_n$ be a two-tree of order $n\geq 5$ and $G_n\ncong X_n$. Then
\begin{equation*}
\begin{split}
SO(G_n)\leq& (n-4)\sqrt{(n-2)^2+4}+(n-3)\sqrt{(n-1)^2+4}+\sqrt{(n-1)^2+(n-2)^2}\\
&+\sqrt{13}+\sqrt{(n-1)^2+9}+\sqrt{(n-2)^2+9},
\end{split}
\end{equation*}
with equality holds if and only if $G_n\cong L_n$.
\end{theorem}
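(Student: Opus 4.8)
The plan is to imitate the proof of the preceding theorem and argue by induction on $n$. The base case is $n=5$, where $L_5$ is the only two-tree of order $5$ other than $X_5$, so nothing has to be shown. For the inductive step, pick any degree-$2$ vertex $w$ of $G_n$, let $u,v$ be its two neighbours with $a=d(u)\ge b=d(v)$, and recall that $G_n-w$ is a two-tree of order $n-1$ for which the same exchange identity used before holds:
\[
SO(G_n)=SO(G_n-w)+\sqrt{a^2+4}+\sqrt{b^2+4}+\Big(\sqrt{a^2+b^2}-\sqrt{(a-1)^2+(b-1)^2}\Big)+\Sigma_u+\Sigma_v ,
\]
where $\Sigma_u=\sum_{i=1}^{a-2}\big(\sqrt{a^2+d(u_i)^2}-\sqrt{(a-1)^2+d(u_i)^2}\big)$ and $\Sigma_v$ is the analogous sum over the neighbours $v_j$ of $v$.

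If $w$ can be chosen with $G_n-w\cong X_{n-1}$, then $G_n$ is obtained from $X_{n-1}$ by attaching a new degree-$2$ vertex to an edge; since $X_{n-1}$ has exactly two edge orbits — the edge joining its two vertices of degree $n-2$ (attaching $w$ there yields $X_n$) and an edge from a vertex of degree $n-2$ to a vertex of degree $2$ (attaching $w$ there yields $L_n$) — and $G_n\ncong X_n$, we get $G_n\cong L_n$. So assume $G_n-w\ncong X_{n-1}$ for \emph{every} degree-$2$ vertex $w$. Then the induction hypothesis gives $SO(G_n-w)\le SO(L_{n-1})$, so it is enough to show that the last four terms of the identity sum to at most $\Delta:=SO(L_n)-SO(L_{n-1})$, an explicit combination of radicals read off from the Proposition. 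Note also that $a=b=n-1$ is impossible: otherwise $u$ and $v$ would be adjacent vertices of degree $n-2$ in the order-$(n-1)$ two-tree $G_n-w$, which by a count of edges forces $G_n-w\cong X_{n-1}$. Hence $b\le n-2$.

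The crucial case is $(a,b)=(n-1,n-2)$. Here $u$ is adjacent to every other vertex, and $v$ is non-adjacent to exactly one vertex $z$; since $v$ is adjacent to both $u$ and $w$, we have $z\notin\{u,v,w\}$, and because $d(z)\ge2$ while $z$ is adjacent to neither $v$ nor $w$, the vertex $z$ has a neighbour $t\notin\{u,v,w,z\}$. Then $t$ is adjacent to $u$, $v$ and $z$, so $d(t)\ge3$, and $t$ occurs both among the $u_i$ and among the $v_j$. Bounding the term of $\Sigma_u$ (resp.\ $\Sigma_v$) coming from $t$ by $f(n-1,9)$ (resp.\ $f(n-2,9)$) and every other term by $f(n-1,4)$ (resp.\ $f(n-2,4)$), as permitted by Lemma \ref{lemma2}, gives
\[
\Sigma_u\le(n-4)f(n-1,4)+f(n-1,9),\qquad \Sigma_v\le(n-5)f(n-2,4)+f(n-2,9)
\]
in the notation of Lemma \ref{lemma2}; adding $\sqrt{(n-1)^2+4}$, $\sqrt{(n-2)^2+4}$ and $\sqrt{(n-1)^2+(n-2)^2}-\sqrt{(n-2)^2+(n-3)^2}$ and simplifying reproduces $\Delta$ exactly, so $SO(G_n)\le SO(L_n)$.

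In the remaining cases, namely $a=n-1$ with $b\le n-3$ or $a\le n-2$, I would use only the coarse bounds $\sqrt{a^2+b^2}-\sqrt{(a-1)^2+(b-1)^2}\le\sqrt2$ (Lemma \ref{lemma1}), $\Sigma_u\le(a-2)f(a,4)$ and $\Sigma_v\le(b-2)f(b,4)$ (Lemma \ref{lemma2}), together with the monotonicity of $f(\cdot,4)$ in its first argument and of $\sqrt{x^2+4}$ in $x$, to reduce everything to the two numerical inequalities obtained at the extreme pairs $(a,b)=(n-1,n-3)$ and $(a,b)=(n-2,n-2)$; each of these asserts that a concrete sum of radicals is at most $\Delta$, with a slack that grows linearly in $n$, so it holds for all $n\ge6$ after checking the first value or two by hand. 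This last verification is the only place requiring genuine computation and is the step I expect to be the main obstacle. For the equality discussion, note that $L_n$ itself falls into the first case above, so in the remaining cases the inequality is strict; hence $SO(G_n)=SO(L_n)$ forces $G_n\cong L_n$, and conversely $L_n$ attains the bound.
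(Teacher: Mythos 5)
Your overall strategy is the same as the paper's (induction on $n$, delete a simplicial degree-$2$ vertex $w$, and control the increment with Lemmas \ref{lemma1} and \ref{lemma2}), and two parts of your write-up are actually cleaner than the published argument: the observation that if \emph{some} degree-$2$ vertex $w$ has $G_n-w\cong X_{n-1}$ then $G_n\in\{X_n,L_n\}$ because $X_{n-1}$ has only two edge orbits (the paper simply asserts a $w$ with $G_n-w\ncong X_{n-1}$ can be chosen), and the exact computation in the case $(a,b)=(n-1,n-2)$, which I checked does telescope to $\Delta=SO(L_n)-SO(L_{n-1})$ on the nose. The exclusion of $a=b=n-1$ by edge counting is also correct.

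The genuine gap is the last step, which you yourself flag: the remaining cases are reduced to the two numerical inequalities $h(n-1)+h(n-3)+\sqrt2\le\Delta$ and $2h(n-2)+\sqrt2\le\Delta$, where $h(x)=(x-1)\sqrt{x^2+4}-(x-2)\sqrt{(x-1)^2+4}$, and these are never verified. Moreover your heuristic for why they should hold is wrong: since $h(x)=2x-2+O(1/x)$ and $\Delta=h(n-1)+h(n-2)+\sqrt2+o(1)$, the slack $\Delta-B$ does \emph{not} grow linearly in $n$ — it tends to the constant $h(n-2)-h(n-3)-(\sqrt2-R)\to 2$ (and equals about $1.6$--$1.7$ at $n=6$). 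So the inequalities are true, but only by a bounded margin, and a real proof must supply the monotonicity/limit argument (or the paper's Lemma \ref{lemma2}-based estimates) rather than a "linear slack" appeal. A second, smaller gap is the equality discussion: you claim strictness "in the remaining cases," but the crucial case $(n-1,n-2)$ is not a remaining case and your bound there is met with equality, so you must additionally check that equality in that case forces $G_n\cong L_n$ (it does: equality pins down $G_n-w\cong L_{n-1}$, the degrees $n-1,n-2,3,2,\dots,2$, and an edge count then forces the adjacency structure of $L_n$ — which incidentally contradicts your standing assumption that no degree-$2$ deletion yields $X_{n-1}$, so that case in fact cannot occur under that assumption).
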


\begin{proof}
We prove this result by induction on $n$.

If $n=5$, $G_5$ can only be isomorphic to $L_5$ and $X_5$, so the equality holds.

Assume that the result holds for $n-1$. We choose one vertex $w$ of degree 2 from $G_n$ such that $G_n-w \ncong X_{n-1}$, then $G_{n}-w$ is a two-tree of order $n-1$. By the induction hypothesis, $SO(G_n-w)\leq SO(L_{n-1})$ with equality holds if and only if $G_n-w \cong L_{n-1}$. In the following we prove that $SO(G_n)\leq SO(L_n)$.

Let $u$ and $v$ be two vertices adjacent to the vertex $w$ in $G_n$. Since $n\geq 5$, from the definition of two-tree we known that there must exist a vertex $p$ $(d(p)\geq3)$ which is adjacent to $u$ and $v$(otherwise $G_n-w \cong X_{n-1}$). Let $d(u)=a,d(v)=b,d(p)=c$ and $N_{G_n}(u)\setminus \{v,w,p\}=\{u_1,u_2,\cdots,u_{a-3}\}$, $N_{G_n}(v)\setminus \{u,w,p\}=\{v_1,v_2,\cdots,v_{b-3}\}$. Then $3\leq a,b,c\leq n-1$. Without loss of generality we assume that $a\leq b$.

\textbf{Case 1}: $\max\{b,c\}=c$.

Then $c\leq n-2$ since $p\nsim w$, thus $a\leq b\leq n-2$, and
\begin{align*}
   SO(G_n)=&SO(G_n-w)+\sqrt{a^2+4}+\sqrt{b^2+4}+\sqrt{a^2+b^2}-\sqrt{(a-1)^2+(b-1)^2}\\
          &+\sqrt{a^2+c^2}-\sqrt{(a-1)^2+c^2}+\sqrt{b^2+c^2}-\sqrt{(b-1)^2+c^2}\\
          &+\sum_{i=1}^{a-3}\Big(\sqrt{a^2+d(u_i)^2}-\sqrt{(a-1)^2+d(u_i)^2}\Big)\\
          &+\sum_{j=1}^{b-3}\Big(\sqrt{b^2+d(v_j)^2}-\sqrt{(b-1)^2+d(v_j)^2}\Big)\\
          \leq &SO(L_{n-1})+2\sqrt{(n-2)^2+4}+\sqrt{2}+2\sqrt{(n-2)^2+9}-2\sqrt{(n-3)^2+9}\\
          &+2(n-5)\sqrt{(n-2)^2+4}-2(n-5)\sqrt{(n-3)^2+4}\\
          =&(n-5)\sqrt{(n-3)^2+4}+(n-4)\sqrt{(n-2)^2+4}+\sqrt{(n-2)^2+(n-3)^2}\\
          &+\sqrt{(n-2)^2+9}+\sqrt{(n-3)^2+9}+\sqrt{13}+2\sqrt{(n-2)^2+4}\\
          &+\sqrt{2}+2\sqrt{(n-2)^2+9}-2\sqrt{(n-3)^2+9}\\
          &+2(n-5)\sqrt{(n-2)^2+4}-2(n-5)\sqrt{(n-3)^2+4}\\
          =&3(n-4)\sqrt{(n-2)^2+4}-(n-5)\sqrt{(n-3)^2+4}+\sqrt{(n-2)^2+(n-3)^2}\\
          &+3\sqrt{(n-2)^2+9}-\sqrt{(n-3)^2+9}+\sqrt{2}+\sqrt{13}.\quad(1)
\end{align*}

Note that
\begin{equation*}
\begin{split}
SO(L_n)-(1)=&(n-3)\sqrt{(n-1)^2+4}+(n-5)\sqrt{(n-3)^2+4}-2(n-4)\sqrt{(n-2)^2+4}\\
&+\sqrt{(n-1)^2+(n-2)^2}-\sqrt{(n-2)^2+(n-3)^2}+\sqrt{(n-1)^2+9}\\
&+\sqrt{(n-3)^2+9}-2\sqrt{(n-2)^2+9}-\sqrt{2}.
\end{split}
\end{equation*}

Let $x=n-1, y=4$ $(n\geq 6)$ in Lemma \ref{lemma2} we have
\begin{equation*}
\begin{split}
&(n-3)\sqrt{(n-1)^2+4}+(n-5)\sqrt{(n-3)^2+4}-2(n-4)\sqrt{(n-2)^2+4}\\
=&2f(n-1,4)+(n-5)g(n-1,4)\geq 2f(5,4)+g(5,4)=1.8725.
\end{split}
\end{equation*}

Also from Lemma \ref{lemma2} it is easy to check that for $n\geq 6$,
\begin{equation*}
\sqrt{(n-1)^2+(n-2)^2}-\sqrt{(n-2)^2+(n-3)^2}\geq 1.40312,\\
\end{equation*}
and
\begin{equation*}
\sqrt{(n-1)^2+9}+\sqrt{(n-3)^2+9}-2\sqrt{(n-2)^2+9}=g(n-1,9)\geq0.
\end{equation*}
Hence $SO(G_n)\leq (1)<SO(L_n)$.

\textbf{Case 2}: $\max\{b,c\}=b$.

Then we have $b\leq n-1$ and $\max\{a,c\}\leq n-2$ (otherwise $G_n\cong X_n$). Thus
\begin{align*}
SO(G_n)=&SO(G_n-w)+\sqrt{a^2+4}+\sqrt{b^2+4}+\sqrt{a^2+b^2}-\sqrt{(a-1)^2+(b-1)^2}\\
          &+\sqrt{a^2+c^2}-\sqrt{(a-1)^2+c^2}+\sqrt{b^2+c^2}-\sqrt{(b-1)^2+c^2}\\
          &+\sum_{i=1}^{a-3}\Big(\sqrt{a^2+d(u_i)^2}-\sqrt{(a-1)^2+d(u_i)^2}\Big)\\
          &+\sum_{j=1}^{b-3}\Big(\sqrt{b^2+d(v_j)^2}-\sqrt{(b-1)^2+d(v_j)^2}\Big)\quad(2)\\
          \leq&SO(L_{n-1})+\sqrt{(n-2)^2+4}+\sqrt{(n-1)^2+4}\\
          &+\sqrt{(n-1)^2+(n-2)^2}-\sqrt{(n-2)^2+(n-3)^2}+\sqrt{(n-2)^2+9}\\
          &-\sqrt{(n-3)^2+9}+\sqrt{(n-1)^2+9}-\sqrt{(n-2)^2+9}\\
          &+(n-5)\sqrt{(n-2)^2+4}-(n-5)\sqrt{(n-3)^2+4}\\
          &+(n-4)\sqrt{(n-1)^2+4}-(n-4)\sqrt{(n-2)^2+4}\\
          =&(n-3)\sqrt{(n-1)^2+4}+(n-4)\sqrt{(n-2)^2+4}+\sqrt{(n-1)^2+(n-2)^2}\\
          &+\sqrt{13}+\sqrt{(n-1)^2+9}+\sqrt{(n-2)^2+9}\\
          =&SO(L_n).
\end{align*}

From Lemma \ref{lemma1} and Lemma \ref{lemma2} we can deduce that the sum at the right side of the first equality $(2)$, except the $SO(G_n-w)$ term is monotonically increasing with respect to $a$ and monotonically decreasing with respect to $c,d(u_i)$ and $d(v_i)$. Hence the maximum value is taken when $a=n-2, c=3, d(u_i)=d(v_i)=2$ for all $i$.

Next we use Lemma \ref{lemma2} repeatedly to prove the sum has maximum value when $b=n-1$. We consider the part of the sum containing b. We only need to compare the case $b=n-1$ and $b=n-2$, because when b is less than $n-2$, the sum will be smaller than $b=n-2$. For $b=n-1$, $b=n-2$, these sums are
\begin{equation*}
\begin{split}
  &\sqrt{(n-1)^2+4}+\big(\sqrt{(n-2)^2+(n-1)^2}-\sqrt{(n-3)^2+(n-2)^2}\big)+\big(\sqrt{(n-1)^2+9}\\
  &-\sqrt{(n-2)^2+9}\big)+(n-4)\big(\sqrt{(n-1)^2+4}-\sqrt{(n-2)^2+4}\big)\quad(3)
\end{split}
\end{equation*}
and
\begin{equation*}
\begin{split}
  &\sqrt{(n-2)^2+4}+\big(\sqrt{(n-2)^2+(n-2)^2}-\sqrt{(n-3)^2+(n-3)^2}\big)+\big(\sqrt{(n-2)^2+9}\\
  &-\sqrt{(n-3)^2+9}\big)+(n-5)\big(\sqrt{(n-2)^2+4}-\sqrt{(n-3)^2+4}\big)
\end{split}
\end{equation*}
respectively. By comparing the corresponding terms, we find that the first case is larger than the second, except for item $\sqrt{(n-2)^2+b^2}-\sqrt{(n-3)^2+(b-1)^2}$. However,
\begin{equation*}
\begin{split}
  &\sqrt{(n-2)^2+(n-1)^2}-\sqrt{(n-3)^2+(n-2)^2}-\big(\sqrt{(n-2)^2+(n-2)^2}\\
  &-\sqrt{(n-3)^2+(n-3)^2}\big)\\
  =&\sqrt{(n-2)^2+(n-1)^2}-\sqrt{(n-2)^2+(n-2)^2}-\big(\sqrt{(n-3)^2+(n-2)^2}\\
  &-\sqrt{(n-3)^2+(n-3)^2}\big)\\
  <&\sqrt{(n-2)^2+(n-1)^2}-\sqrt{(n-2)^2+(n-2)^2}\\
  <&\sqrt{(n-1)^2+4}-\sqrt{(n-2)^2+4}
\end{split}
\end{equation*}
where $\sqrt{(n-1)^2+4}-\sqrt{(n-2)^2+4}$ can be contributed in the last part of $(3)$ $(n-4-(n-5)=1)$. Therefore equality holds if and only if $G_n-w \cong L_{n-1}$, $d(u)=n-2, d(v)=n-1, d(p)=3$ and the remaining vertices have degree 2. Which implies $G_n\cong L_n$ and we complete the proof.
\end{proof}

Next we consider the minimum Sombor coindex of two-trees.
\begin{theorem}
Let $G_n$ be a two-tree of order $n\geq2$. Then
\begin{equation*}
\overline{SO}(G_n)\geq\sqrt{2}(n-2)(n-3),
\end{equation*}
with equality holds if and only if $G_n\cong X_n$.
\end{theorem}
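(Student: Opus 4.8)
The plan is to argue by induction on $n$, in the same spirit as the proofs above, but working with the coindex directly; in fact no analogue of Lemmas~\ref{lemma1}--\ref{lemma2} is really needed here, because every correction term that appears has an obvious sign. For the base cases $n=2,3$ the two-tree $G_n$ is the complete graph $K_2$ or $K_3$, which is the unique two-tree of that order and has no non-edges, so $\overline{SO}(G_n)=0=\sqrt{2}(n-2)(n-3)$ with $G_n\cong X_n$.

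For the inductive step, assume the statement for two-trees of order $n-1$, where $n\geq 4$, and let $G_n$ be a two-tree of order $n$. Choose a vertex $w$ of degree $2$; its two neighbours $u,v$ satisfy $uv\in E(G_n)$, and $G_n-w$ is a two-tree of order $n-1$ --- this is exactly the reduction used above. Put $a=d_{G_n}(u)$, $b=d_{G_n}(v)$ and let $z_1,\ldots,z_{n-3}$ be the remaining vertices. The identity I would first establish is
\begin{align*}
\overline{SO}(G_n)=\overline{SO}(G_n-w)&+\sum_{i=1}^{n-3}\sqrt{2^2+d(z_i)^2}\\
&+\sum_{z_i\nsim u}\Big(\sqrt{a^2+d(z_i)^2}-\sqrt{(a-1)^2+d(z_i)^2}\Big)\\
&+\sum_{z_j\nsim v}\Big(\sqrt{b^2+d(z_j)^2}-\sqrt{(b-1)^2+d(z_j)^2}\Big),
\end{align*}
obtained by splitting the non-edges of $G_n$ into the $n-3$ non-edges $wz_i$ (each of weight $\sqrt{2^2+d(z_i)^2}$) and the non-edges avoiding $w$; the latter are precisely the non-edges of $G_n-w$, and their weights change only through the increase by $1$ of $d(u)$ and $d(v)$ when passing from $G_n-w$ to $G_n$ --- note $uv\in E(G_n)$, so $uv$ never occurs as a non-edge, which is why there is no $\sqrt{a^2+b^2}-\sqrt{(a-1)^2+(b-1)^2}$ term.

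Next I would bound the three sums from below. Since every vertex of a two-tree of order $\geq 3$ has degree at least $2$, each $\sqrt{2^2+d(z_i)^2}\geq 2\sqrt{2}$, with equality iff $d(z_i)=2$; and each summand of the last two sums is strictly positive (clear, or by Lemma~\ref{lemma2}(i), since $a,b\geq 2$). Hence $\overline{SO}(G_n)\geq \overline{SO}(G_n-w)+2\sqrt{2}(n-3)$, and combining with the induction hypothesis $\overline{SO}(G_n-w)\geq \overline{SO}(X_{n-1})=\sqrt{2}(n-3)(n-4)$ gives $\overline{SO}(G_n)\geq \sqrt{2}(n-3)(n-4)+2\sqrt{2}(n-3)=\sqrt{2}(n-2)(n-3)$.

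Finally, for the equality characterisation: equality forces simultaneously $G_n-w\cong X_{n-1}$, $d(z_i)=2$ for every $i$, and both correction sums empty, i.e.\ $u$ and $v$ adjacent to all of $z_1,\ldots,z_{n-3}$. The latter conditions give $d(u)=d(v)=n-1$ and that each of $w,z_1,\ldots,z_{n-3}$ has exactly $u,v$ as neighbours, so $\{w,z_1,\ldots,z_{n-3}\}$ is an independent set fully joined to the edge $uv$; that is, $G_n$ is $K_{2,n-2}$ together with the edge $uv$, so $G_n\cong X_n$. Conversely $\overline{SO}(X_n)=\sqrt{2}(n-2)(n-3)$ by the Proposition. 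The only delicate point is the bookkeeping in the decomposition identity --- checking that each non-edge of $G_n$ is counted exactly once, split correctly between the ``through $w$'' and the ``avoiding $w$'' families, and that the edge $uv$ contributes nothing to the coindex; the inequalities and the equality analysis are then routine.
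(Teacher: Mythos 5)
Your proposal is correct and follows essentially the same route as the paper's own proof: delete a degree-$2$ vertex $w$, decompose $\overline{SO}(G_n)$ into $\overline{SO}(G_n-w)$ plus the $n-3$ new non-edges $wz_i$ plus the degree-correction terms at $u$ and $v$, bound the new non-edges below by $2\sqrt{2}(n-3)$ and the corrections by $0$, and invoke the induction hypothesis. Your equality analysis (forcing the correction sums to be empty, hence $d(u)=d(v)=n-1$, and all $d(z_i)=2$) is in fact spelled out a bit more carefully than in the paper, but it is the same argument.
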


\begin{proof}
We prove this result by induction on $n$.

For $n=2,3,4$, $G_n$ can only be isomorphic to $K_2, K_3, X_4$ where $K_i$ is the complete graph of order $i$, respectively, so the equality holds.

Assume that the result holds for $n-1$. Let $w$ be a vertex of $G_n$ with degree 2, then $G_{n}-w$ is a two-tree of order $n-1$. By the induction hypothesis, $\overline{SO}(G_n-w)\geq \overline{SO}(X_{n-1})$ with equality holds if and only if $G_n-w \cong X_{n-1}$. In the following we prove that $\overline{SO}(G_n)\geq \overline{SO}(X_n)$.

Let $N_{G_n}(w)=\{u,v\}$ and $d(u)=a,d(v)=b$.
Then $3\leq a,b\leq n-1$, and
  \begin{equation*}
  \begin{split}
   \overline{SO}(G_n)=&\overline{SO}(G_n-w)+\sum_{u_i\nsim u}\left(\sqrt{a^2+d(u_i)^2}-\sqrt{(a-1)^2+d(u_i)^2}\right)\\
     &+\sum_{v_i\nsim v}\left(\sqrt{b^2+d(v_i)^2}-\sqrt{(b-1)^2+d(v_i)^2}\right)+\sum_{t\nsim w}\sqrt{4+d(t)^2}\\
     \geq&\overline{SO}(X_{n-1})+\sqrt{8}(n-3)\\
     =&\overline{SO}(X_n).
   \end{split}
  \end{equation*}

  If $a=b=n-1$, the first two summations are equal to 0 since no vertices are non-adjacent to $u, v$, and the inequality holds since $d(t)\geq 2$. Hence the equality holds if and only if $a=b=n-1$ and $d(t)=2$, which implies $G_n\cong X_n$.
\end{proof}

Finally, we consider the second minimum Sombor coindex of two-trees.
\begin{lemma}\label{lem2}
  Let $f(x,y)=\sqrt{y^2+x^2}-\sqrt{(y-1)^2+x^2}+\sqrt{4+x^2}$ where $x>0, y\geq3$. Then $f(x,y)$ is monotonically increasing with $x$.
\end{lemma}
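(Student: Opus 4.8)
The plan is to prove the claim by directly computing $\partial f/\partial x$ and then applying one elementary comparison. First I would differentiate with respect to $x$:
\begin{equation*}
\frac{\partial f}{\partial x}=\frac{x}{\sqrt{x^2+y^2}}-\frac{x}{\sqrt{x^2+(y-1)^2}}+\frac{x}{\sqrt{x^2+4}}
=x\left(\frac{1}{\sqrt{x^2+y^2}}-\frac{1}{\sqrt{x^2+(y-1)^2}}+\frac{1}{\sqrt{x^2+4}}\right).
\end{equation*}
Since $x>0$, it suffices to show that the quantity in parentheses is positive. Note that the first two terms together are negative: because $y^2>(y-1)^2$ we have $\sqrt{x^2+y^2}>\sqrt{x^2+(y-1)^2}$ and hence $\frac{1}{\sqrt{x^2+y^2}}<\frac{1}{\sqrt{x^2+(y-1)^2}}$. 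So everything hinges on the extra term $\frac{1}{\sqrt{x^2+4}}$ outweighing this deficit.

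Next I would use the hypothesis $y\geq 3$, which is exactly the condition that makes the argument go through: it gives $(y-1)^2\geq 4$, so $x^2+(y-1)^2\geq x^2+4>0$, and therefore $\frac{1}{\sqrt{x^2+(y-1)^2}}\leq\frac{1}{\sqrt{x^2+4}}$. Substituting this bound,
\begin{equation*}
\frac{1}{\sqrt{x^2+y^2}}-\frac{1}{\sqrt{x^2+(y-1)^2}}+\frac{1}{\sqrt{x^2+4}}\;\geq\;\frac{1}{\sqrt{x^2+y^2}}\;>\;0.
\end{equation*}
Hence $\partial f/\partial x>0$ for all $x>0$ and $y\geq 3$, which gives the (strict) monotone increase of $f$ in $x$, as desired.

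I do not expect any genuine obstacle. The only thing worth noting is that the hypothesis $y\geq 3$ is used in precisely one place and is the natural threshold: it is exactly what guarantees $\frac{1}{\sqrt{x^2+4}}\geq\frac{1}{\sqrt{x^2+(y-1)^2}}$, so no sharper tool (mean value theorem, squaring both sides, etc.) is needed. One could alternatively start from the identity $f(x,y)=\sqrt{x^2+4}+\dfrac{2y-1}{\sqrt{x^2+y^2}+\sqrt{x^2+(y-1)^2}}$, but since the second summand decreases in $x$ this reformulation still requires a rate estimate, so the derivative computation above is the cleanest path.
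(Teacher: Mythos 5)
Your proof is correct and follows essentially the same route as the paper: differentiate in $x$, factor out $x>0$, and use $y\geq 3\Rightarrow (y-1)^2\geq 4$ to conclude $\frac{1}{\sqrt{x^2+4}}\geq\frac{1}{\sqrt{x^2+(y-1)^2}}$, so the derivative is positive. No issues.
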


\begin{proof}
  The derivative of function $f$ with respect to $x$ is
  \begin{equation*}
      f'(x)=\frac{x}{\sqrt{y^2+x^2}}-\frac{x}{\sqrt{(y-1)^2+x^2}}+\frac{x}{\sqrt{4+x^2}}.
  \end{equation*}

  And $f'(x)>0$ since
  \begin{equation*}
      \frac{1}{\sqrt{4+x^2}}-\frac{1}{\sqrt{(y-1)^2+x^2}}=\frac{\sqrt{(y-1)^2+x^2}-\sqrt{4+x^2}}
      {\sqrt{(y-1)^2+x^2}\sqrt{4+x^2}}\geq0.
  \end{equation*}
\end{proof}
\begin{theorem}
  Let $G_n$ ba a two-tree of order $n\geq5$ and $G_n\ncong X_n$. Then
  \begin{equation*}
    \overline{SO}(G_n)\geq \sqrt{2}(n-3)(n-4)+\sqrt{(n-2)^2+4}+\sqrt{13}(n-4),
  \end{equation*}
  with equality holds if and only if $G_n\cong L_n$.
\end{theorem}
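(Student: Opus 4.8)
The plan is to induct on $n$, following the template of the proof that $L_n$ has the second maximum Sombor index. For $n=5$ the only two-trees of order $5$ are $X_5$ and $L_5$, so $G_5\ncong X_5$ forces $G_5\cong L_5$, which attains equality by the Proposition. For $n\ge 6$ I would first observe that $G_n$ has a degree-$2$ vertex $w$ with $G_n-w\ncong X_{n-1}$: if not, then building $G_n-w$ from the edge $uv$ joining the two neighbours of an arbitrary degree-$2$ vertex $w$ shows that $G_n$ is obtained from $X_{n-1}$ by attaching one vertex to an edge, hence $G_n\in\{X_n,L_n\}$; as $G_n\ncong X_n$ this gives $G_n\cong L_n$, but removing from $L_n$ a degree-$2$ vertex adjacent to its two vertices of largest degree leaves $L_{n-1}$, and $L_{n-1}\ncong X_{n-1}$ for $n\ge 6$, a contradiction. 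Fixing such a $w$, the induction hypothesis applied to the two-tree $G_n-w\ncong X_{n-1}$ of order $n-1$ yields $\overline{SO}(G_n-w)\ge\overline{SO}(L_{n-1})$, with equality iff $G_n-w\cong L_{n-1}$.

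Write $N_{G_n}(w)=\{u,v\}$ (so $uv\in E(G_n)$) and $d(u)=a\le b=d(v)$; then $a,b\ge 3$ since $G_n-w$ is a two-tree of order $\ge 3$, and $a\le n-2$ because $G_n\ncong X_n$. As in the second-maximum-$SO$ proof, $G_n-w\ncong X_{n-1}$ forces a vertex $p\notin N_{G_n}[w]$ adjacent to both $u$ and $v$, with $c:=d(p)\ge 3$. Since in passing from $G_n-w$ to $G_n$ only the non-edges at $w$ (the $n-3$ pairs $wt$, $t\notin\{u,v,w\}$) and those at $u$ or $v$ are affected, and $d_{G_n}(t)=d_{G_n-w}(t)$ for all $t\notin\{u,v,w\}$, one obtains
\[
\overline{SO}(G_n)=\overline{SO}(G_n-w)+\!\!\sum_{t\notin N[u]}\!\!f(a,d(t)^2)+\!\!\sum_{t\notin N[v]}\!\!f(b,d(t)^2)+\!\!\sum_{t\notin\{u,v,w\}}\!\!\sqrt{4+d(t)^2},
\]
with $f(x,y)=\sqrt{x^2+y}-\sqrt{(x-1)^2+y}$ as in Lemma~\ref{lemma2}. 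Regrouping the three sums by the vertex $t\in V(G_n)\setminus\{u,v,w\}$, I would pair each non-neighbour $t$ of $u$ with its term $\sqrt{4+d(t)^2}$ and apply Lemma~\ref{lem2} (with $y=a$, $x=d(t)$) to get $\sqrt{4+d(t)^2}+f(a,d(t)^2)\ge 2\sqrt2+f(a,4)$, strict unless $d(t)=2$; bounding the $p$-term below by $\sqrt{4+c^2}\ge\sqrt{13}$, the remaining terms $\sqrt{4+d(t)^2}$ below by $2\sqrt2$, and discarding the non-negative sum over non-neighbours of $v$, this yields
\[
\overline{SO}(G_n)\ \ge\ \overline{SO}(L_{n-1})+\sqrt{13}+2\sqrt2\,(n-4)+(n-1-a)\,f(a,4).
\]

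Comparing with $\overline{SO}(L_n)=\overline{SO}(L_{n-1})+\sqrt{13}+2\sqrt2\,(n-4)+\bigl(\sqrt{(n-2)^2+4}-\sqrt{(n-3)^2+4}\bigr)$ (Proposition), it suffices to note $(n-1-a)f(a,4)\ge f(n-2,4)$ for $3\le a\le n-2$: since $f(\cdot,4)$ is increasing (Lemma~\ref{lemma2}(i)), for $a=n-2$ both sides equal $f(n-2,4)=\sqrt{(n-2)^2+4}-\sqrt{(n-3)^2+4}<1$, while for $a\le n-3$ we have $(n-1-a)f(a,4)\ge 2f(3,4)=2(\sqrt{13}-2\sqrt2)>1$. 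Hence $\overline{SO}(G_n)\ge\overline{SO}(L_n)$. Running the inequalities backwards, equality forces $G_n-w\cong L_{n-1}$, $c=3$, $a=n-2$, $d(t)=2$ for every $t\notin\{u,v,w,p\}$, and --- because every discarded term $f(b,d(t)^2)$ is strictly positive --- that $v$ has no non-neighbour, i.e. $b=n-1$. I would then finish by showing that these data reconstruct $G_n$: they say $w$ is attached to the edge of $L_{n-1}$ joining a vertex of degree $n-3$ to the (unique) universal vertex, and every such attachment produces $L_n$; conversely $L_n$ realises equality with $w$ taken to be a degree-$2$ vertex adjacent to its two vertices of largest degree.

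The step I expect to be the main obstacle is this equality bookkeeping: one must confirm that the three relaxations (Lemma~\ref{lem2}, $\sqrt{4+c^2}\ge\sqrt{13}$, and dropping $\sum_{t\notin N[v]}f(b,d(t)^2)$) are simultaneously tight only at $L_n$ --- in particular that dropping the $b$-sum is costless precisely because its vanishing already forces $v$ to be universal --- and that $G_n-w\cong L_{n-1}$ together with the prescribed degrees admits no graph other than $L_n$. The inequality $(n-1-a)f(a,4)\ge f(n-2,4)$ is the only numerical point, and it follows readily from Lemmas~\ref{lemma1}, \ref{lemma2} and \ref{lem2}.
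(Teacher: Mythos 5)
Your proposal is correct, and it follows the same skeleton as the paper's proof: induction on $n$, deletion of a degree-$2$ vertex $w$ with $G_n-w\ncong X_{n-1}$, the same decomposition of $\overline{SO}(G_n)-\overline{SO}(G_n-w)$ into the three sums, and the same two auxiliary lemmas (Lemma~\ref{lemma2} and Lemma~\ref{lem2}). Where you genuinely diverge is in the final estimate. The paper splits into two cases according to whether $\max\{b,c\}=c$ or $\max\{b,c\}=b$, keeps both the $u$-sum and the $v$-sum in Case 1, and verifies the equality configuration only in Case 2; you instead discard the $v$-sum wholesale, use only $a\le n-2$ (forced by $G_n\ncong X_n$), and reduce everything to the single inequality $(n-1-a)f(a,4)\ge f(n-2,4)$, tight exactly at $a=n-2$. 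This removes the case distinction and, as a bonus, makes the equality analysis cleaner: the vanishing of the discarded $v$-sum is precisely what forces $b=n-1$, whereas the paper simply asserts the equality conditions at the end of Case 2. You also supply two justifications the paper omits --- the existence of a degree-$2$ vertex $w$ with $G_n-w\ncong X_{n-1}$ (via the observation that otherwise $G_n\in\{X_n,L_n\}$, and $L_n$ does admit such a $w$), and the final reconstruction step showing that $G_n-w\cong L_{n-1}$ together with the forced degrees $a=n-2$, $b=n-1$, $c=3$ yields $G_n\cong L_n$. All the numerical checks in your argument ($2(\sqrt{13}-2\sqrt2)>1>f(n-2,4)$, the count of $n-4$ terms equal to $2\sqrt2$, and the identity $\overline{SO}(L_n)-\overline{SO}(L_{n-1})=2\sqrt2(n-4)+\sqrt{13}+f(n-2,4)$) are correct.
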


\begin{proof}
We prove this result by induction on $n$.

For $n=5$, $G_n$ can only be isomorphic to $L_5$, so the equality holds.

Assume that the result holds for $n-1$. Let $w$ be a vertex of $G_n$ with degree 2 such that $G_{n}-w \ncong X_{n-1}$, then $G_{n}-w$ is a two-tree of order $n-1$. By the induction hypothesis, $\overline{SO}(G_n-w)\geq \overline{SO}(L_{n-1})$ with equality holds if and only if $G_n-w \cong L_{n-1}$. In the following we prove that $\overline{SO}(G_n)\geq \overline{SO}(L_n)$.

Let $N_{G_n}(w)=\{u,v\}$ and $d(u)=a,d(v)=b$, then $3\leq a,b\leq n-1$.
From the definition of two-tree we know that there must exist a vertex $p$ adjacent to $u$ and $v$ with $d(p)=c\geq3$. Without loss of generality we assume $a\leq b$.

  \textbf{Case 1}: $\max\{b, c\}=c$.

  Then $c\leq n-2$ since $p\nsim w$, hence $a\leq b\leq n-2$, and
  \begin{align*}
      \overline{SO}(G_n)=&\overline{SO}(G_n-w)+\sum_{u_i\nsim u}\left(\sqrt{a^2+d(u_i)^2}-\sqrt{(a-1)^2+d(u_i)^2}\right)\\
      &+\sum_{v_i\nsim v}\left(\sqrt{b^2+d(v_i)^2}-\sqrt{(b-1)^2+d(v_i)^2}\right)+\sum_{t\nsim w}\sqrt{4+d(t)^2}\\
      \geq&\overline{SO}(L_{n-1})+\sum_{u_i\nsim u}\left(\sqrt{a^2+4}-\sqrt{(a-1)^2+4}\right)\\
      &+\sum_{v_i\nsim v}\left(\sqrt{b^2+4}-\sqrt{(b-1)^2+4}\right)
      +\sum_{t\nsim w,t\neq p}\sqrt{4+4}+\sqrt{3^2+4}\\
      \geq&\overline{SO}(L_{n-1})+2\left(\sqrt{(n-2)^2+4}-\sqrt{(n-3)^2+4}\right)\\
      &+2\sqrt{2}(n-4)+\sqrt{13}\\
      =&\sqrt{2}(n-4)(n-5)-\sqrt{(n-3)^2+4}+\sqrt{13}(n-4)\\
      &+2\sqrt{(n-2)^2+4}+2\sqrt{2}(n-4)\quad(4)\\
      >&\overline{SO}(L_n).
  \end{align*}

  From the definition of the Sombor coindex and the construction of $G_n$, we know that the vertices $u_i, v_i$ must equals some $t$ in the last summation $\sum \limits_{t\nsim w}\sqrt{4+d(t)^2}$. Let $x=d(u_i), y=a$ and $x=d(v_i), y=b$ in Lemma \ref{lem2} respectively and the first inequality holds.

  It is not difficult to find that $\sum \limits _{u_i\nsim u}\left(\sqrt{a^2+4}-\sqrt{(a-1)^2+4}\right)$ has $n-1-a$ summation terms, then for $a\leq n-3$, we have
  \begin{equation*}
  \sum_{u_i\nsim u}\left(\sqrt{a^2+4}-\sqrt{(a-1)^2+4}\right) =(n-1-a)(\sqrt{a^2+4}-\sqrt{(a-1)^2+4})>1,
  \end{equation*}
  where the inequality holds since $n-1-a\geq2$ and $\sqrt{a^2+4}-\sqrt{(a-1)^2+4}\geq \sqrt{13}-\sqrt{8}\approx 0.777$.

  If $a=n-2$,
  \begin{equation*}
  \sum_{u_i\nsim u}\left(\sqrt{a^2+4}-\sqrt{(a-1)^2+4}\right) =  \sqrt{(n-2)^2+4}-\sqrt{(n-3)^2+4}<1,
  \end{equation*}
  since $\sqrt{(n-1)^2+4}-\sqrt{(n-2)^2+4}$ represents the difference between the distance from point $(n-1,2)$ to point $(0,0)$ and $(1,0)$. So the second inequality holds.

  The last inequality holds since
  \begin{equation*}
      (4)-\overline{SO}(L_n)=\sqrt{(n-2)^2+4}-\sqrt{(n-3)^2+4}>0.
  \end{equation*}
  \textbf{Case 2}: $\max\{b, c\}=b$.

  Then $b\leq n-1$ and $a\leq n-2$ since $G\ncong X_n$. Similarly we have
\begin{align*}
      \overline{SO}(G_n)=&\overline{SO}(G_n-w)+\sum_{u_i\nsim u}\left(\sqrt{a^2+d(u_i)^2}-\sqrt{(a-1)^2+d(u_i)^2}\right)\\
      &+\sum_{v_i\nsim v}\left(\sqrt{b^2+d(v_i)^2}-\sqrt{(b-1)^2+d(v_i)^2}\right)+\sum_{t\nsim w}\sqrt{4+d(t)^2}\\
      \geq&\overline{SO}(L_{n-1})+\sqrt{a^2+4}-\sqrt{(a-1)^2+4}
      +\sum_{t\nsim w,t\neq p}\sqrt{4+d(t)^2}+\sqrt{13}\\
      \geq&\overline{SO}(L_{n-1})+\sqrt{(n-2)^2+4}-\sqrt{(n-3)^2+4}
      +2\sqrt{2}(n-4)+\sqrt{13}\\
      =&\sqrt{2}(n-3)(n-4)+\sqrt{13}(n-4)+\sqrt{(n-2)^2+4}\\
      =&\overline{SO}(L_n).
  \end{align*}

  Where the equality holds if and only if $a=n-2, b=n-1, c=3$ and the other vertices have degree 2. Hence $\overline{SO}(G_n)\geq \overline{SO}(L_n)$ with equality holds if and only if $G_n\cong L_n$.
\end{proof}

\section{Conclusion}
\hspace{1.5em}In this paper, we investigated the Sombor (co)index of two-trees, which has a very important structure in complex networks. The maximum and second maximum Sombor index, the minimum and second minimum Sombor coindex in two-trees are determined. However, the maximum Sombor coindex, the minimum Sombor index in two-trees are unknown. We conjecture that these two extreme values will be contributed by the following two graphs $H^1_n$ (Figure \ref{H1}), $H^2_n$ (Figure \ref{H2}).
\begin{figure}[htbp]
\centering
\begin{minipage}[t]{0.48\textwidth}
\centering
\includegraphics[width=7cm]{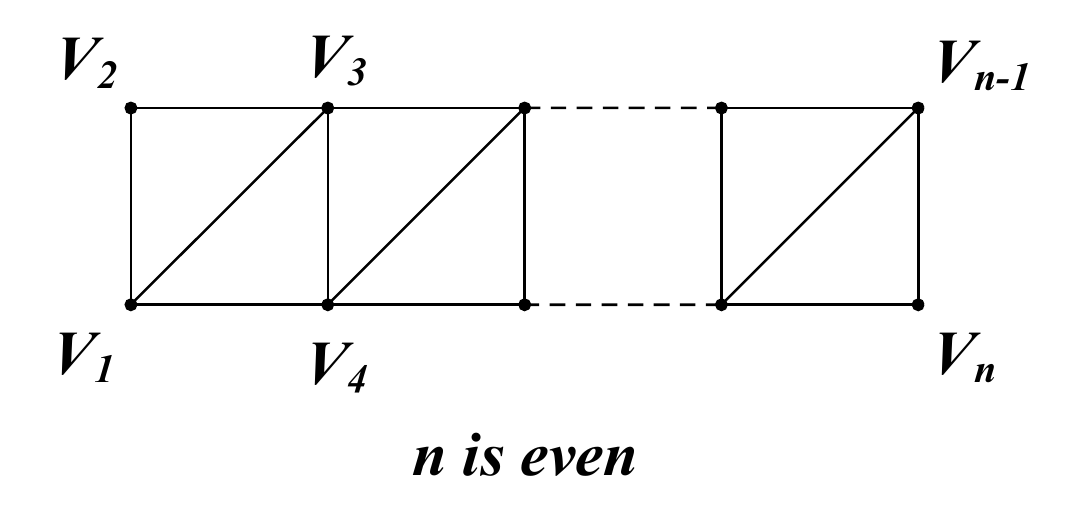}
\caption{The graph $H^1_n$}\label{H1}
\end{minipage}
\begin{minipage}[t]{0.48\textwidth}
\centering
\includegraphics[width=8cm]{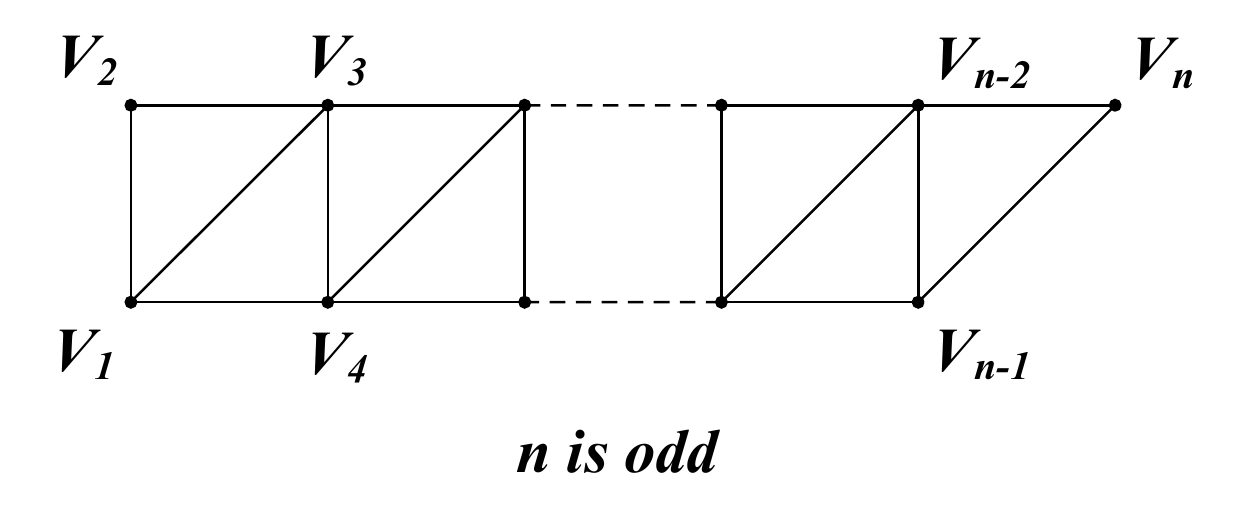}
\caption{The graph $H^2_n$}\label{H2}
\end{minipage}
\end{figure}
\\

\noindent
{\bf Conjecture}\,
Let $G_n$ be a two-tree of order $n$. Then
\begin{equation*}
\begin{split}
&SO(G_n)\geq 6\sqrt{2}n+2\sqrt{13}+4\sqrt{5}+20-33\sqrt{2},\\
&\overline{SO}(G_n)\leq 2\sqrt{2}n^2+(10-26\sqrt{2}n+4\sqrt{5})n+89\sqrt{2}+2\sqrt{13}-20\sqrt{5}-60,
\end{split}
\end{equation*}
with equality holds if and only if $G_n \cong H^i_n$ ($i=1$ if $n$ is even, $i=2$ if $n$ is odd).
\\
\noindent
{\bf Acknowledgement}\,

This work is supported by the National Natural Science Foundation of China (Grant Nos.11971180, 11501139),
the Guangdong Provincial Natural Science Foundation (Grant No. 2019A1515012052).

\end{document}